\documentclass[10.9pt,a4paper]{amsart}
\usepackage{a4wide}

\usepackage[utf8]{inputenc}
\usepackage[english]{babel}

\usepackage{amsfonts,amssymb,amsmath,pinlabel,array,hhline}
\usepackage[nobysame,alphabetic, initials]{amsrefs}

\usepackage[dvipsnames]{xcolor}
\usepackage{xcolor}
\usepackage[colorlinks,
    linkcolor={red!50!black},
    citecolor={blue!50!black},
    urlcolor={blue!80!black}]{hyperref}

\usepackage[position=b]{subcaption}

\usepackage{tikz}
\usepackage[all]{xy}
\usepackage{enumitem}
\makeatletter
\newcommand{\mylabel}[2]{#2\def\@currentlabel{#2}\label{#1}}
\usetikzlibrary{calc, matrix, arrows, cd}


\newcommand{\bsm}{\left(\begin{smallmatrix}}
\newcommand{\esm}{\end{smallmatrix}\right)}

\newtheorem{theorem}{Theorem}[section]

\theoremstyle{definition}
\newtheorem{definition}[theorem]{Definition}

\newtheorem{example}[theorem]{Example}

\newtheorem{remark}[theorem]{Remark}
\newtheorem{construction}[theorem]{Construction}
\newtheorem*{claim}{Claim}
\newtheorem*{claim*}{Claim}

\newcommand{\Z}{\mathbb{Z}}

\newcommand{\C}{\mathbb{C}}

\newcommand{\ks}{\operatorname{ks}}
\newcommand{\id}{\operatorname{id}}

\newcommand{\Aut}{\operatorname{Aut}}

\begin{document}
\title{Simple spheres in simply-connected $4$-manifolds}
\author{Anthony Conway}
\address{The University of Texas at Austin, Austin TX 78712}
\email{anthony.conway@austin.utexas.edu}
\begin{abstract}
These notes, which are based on three lectures delivered at the summer school ``Topological 4-manifolds" at CRM in~2025, discuss classifications of locally flat spheres in closed, simply-connected $4$-manifolds, with a focus on the case where the complements of the spheres have abelian fundamental group.
\end{abstract}

\def\subjclassname{\textup{2020} Mathematics Subject Classification}
\expandafter\let\csname subjclassname@1991\endcsname=\subjclassname
\subjclass{
57N35.
}
\maketitle

The study of surfaces in $4$-manifolds is currently undergoing fast-paced developments.
In order to provide an introduction to this topic in only three lectures,  these notes focus on knotted spheres.
Thus,  even though results on higher genus surfaces are occasionally referenced,  little mention is made of surfaces with boundary and of nonorientable surfaces.
The underlying belief is that, for an introduction to the topic,  working with knotted spheres suffices to get an impression of the results and techniques used in this area.

\medbreak

Lecture~\ref{sec:1} begins with some definitions and examples, Lecture~\ref{sec:2} is concerned with classifications of simple spheres, and Lecture~\ref{sec:3} concludes with realisation results.
Some exercises are listed at the end of the notes.
Since these lectures were part of a summer school devoted to topological~$4$-manifolds,  we will work exclusively in the topological category with locally flat embeddings.
Manifolds are assumed to be compact, connected and oriented.

\medbreak

Finally, an important disclaimer is in order.
These notes are neither meant to serve as a textbook nor as a survey of the area.
As a consequence,  during proof outlines,  references will be given less methodically and comprehensively than they would be in those settings.

\subsection*{Acknowledgments}

Heartfelt thanks go to the organisers of the summer school and conference ``Topological $4$-manifolds" for setting up a wonderful event and for the opportunity to talk about this topic.
This work was partially supported by the NSF grant DMS~2303674.

\section{Lecture one: Definitions, examples and invariants}
\label{sec:1}

During this first lecture, we introduce some definitions related to the study of knotted spheres, before moving on to examples and invariants.

\subsection{Definitions and examples}

Throughout these lectures,  embeddings will be understood to be locally flat.
In what follows, we fix a closed, simply-connected~$4$-manifold~$X$.
Homeomorphisms are assumed to be orientation-preserving.
We begin by fixing some terminology.

\begin{definition}
\label{def:Isotopy}
Two embedded spheres~$S_0 \subset X$ and~$S_1 \subset X$ are
 \emph{equivalent} if there is a homeomorphism~$\Phi \colon X \to X$ with~$\Phi(S_0)=S_1$
and \emph{isotopic} if, moreover,~$\Phi$ is isotopic to the identity.
\end{definition}

One of the main goals in the study of knotted spheres is to understand the set of isotopy classes of spheres in~$X$.
Potentially more manageable tasks include 
\begin{itemize}
\item classifying simple spheres in~$X$ up to isotopy (or equivalence),
\item classifying spheres in~$X$ up to concordance.
\end{itemize}
Here a sphere~$F \subset X$ is \emph{simple} if~$\pi_1(X \setminus S)$ is abelian and therefore cyclic (this is an exercise) and \emph{concordant} if there is an embedded~$S^2 \times [0,1] \cong C \subset X \times [0,1]$ with~$\partial C =S_0 \sqcup -S_1$, where $S_i \subset X \times \{ i\}$ for $i=0,1$.
Building on work of Kervaire on even-dimensional knots~\cite{Kervaire}, Sunukjian proved that homologous surfaces in $X$ are concordant~\cite{Sunukjian} (for recent work regarding concordance in more general~$4$-manifolds, we refer to~\cite{MillerConcordanceAnalogue,KlugMillerMoreGeneral}) so, in fact,  during these lectures, we will mostly focus on the isotopy classification of simple spheres.

\begin{example}
\label{ex:Examples2Knots}
We list examples of spheres in~$4$-manifolds.
\begin{itemize}
\item The \emph{unknot} $U \subset S^4$ refers to the usual embedding~$S^2 \subset S^3$ viewed as a submanifold of~$S^4$ by decomposing the latter as~$S^4=D^4 \cup_{S^3} D^4$.
The exterior $X_U$ of the unknot is homeomorphic to $S^1 \times D^3$ and the unknot is therefore simple: $\pi_1(X_U) \cong \pi_1(S^1 \times D^3)\cong \Z.$
Freedman proved that if $S \subset S^4$ has $\pi_1(X_S) \cong \Z$, then $S$ is isotopic to the unknot~\cite{FreedmanICM}.
This implies that in $S^4$, the only simple knot (up to isotopy) is the unknot.
\item We briefly review Zeeman's spinning construction~\cite{Zeeman} which, starting from a classical knot~$K \subset S^3$, constructs a $2$-knot $\tau^0(K)$ with $\pi_1(S^4 \setminus \tau^0(K))\cong \pi_1(S^3 \setminus K)$ (thus,  there are infinitely many nontrivial $2$-knots in $S^4$).
Writing $S^4=(D^3 \times S^1) \cup (S^2 \times D^2)$ and considering the tangle $K^\circ \subset D^3$ obtained by removing a small  unknotted interval from~$K$, the \emph{spun $2$-knot $\tau^0(K)$} refers to the $2$-knot
$$ (S^4,\tau^0(K))=((D^3 \times S^1) \cup (S^2 \times D^2),K^\circ \times S^1 \cup \partial K^\circ \times D^2).$$
This construction can be generalised to twist spun knots $\tau^n(K)$~\cite{Zeeman} (so that the above spinning occurs when $n=0$) and roll spun knots~\cite{LitherlandDeforming}.
We do not dwell on these constructions since, by Freedman, they do not produce nontrivial simple knots.
\item In~$\C P^2$,  the usual~$\C P^1 \subset \C P^2$ is an embedded sphere that represents a generator of~$H_2(\C P^2)$; this sphere is simple since its complement is simply-connected.
A spherical representative for twice a generator arises from the algebraic curve~$x^2 + y^2+z^2=0$; it can be verified that the complement of this sphere has $\pi_1=\Z_2$.
Reversing the orientation of these spheres  leads to representatives for the
classes $-1$ and $-2$.
A simple sphere representing~$0\in H_2(\C P^2)$ is obtained by considering the unknot $U \subset S^4$ in a small ball~$B^4 \subset \C P^2$.
Tristram proved that~$0,\pm 1,\pm 2 \in H_2(\C P^2) \cong \Z$ are the only classes with spherical representatives~\cite[page 264]{Tristram}.
\item Potentially interesting spheres can also be obtained by more ad-hoc methods. 
For example, in~$S^4$,  spheres can be obtained by taking the union of discs in~$D^4$ with boundary a common knot.
Diagrammatical methods include band unlink diagrams~\cite{SwentonCalculus,HughesKimMillerBanded} and broken surfaces diagrams, see e.g.~\cite{CarterKamadaSaito}.
\end{itemize}
\end{example}

\subsection{Invariants of embedded spheres}

A rapid calculation shows that the homology of the exterior~$X_S:=X\setminus \nu(S)$ only depends on $X$ and on the homology class of $S \subset X$.
On the other hand, the homotopy groups of $X_S$ are powerful invariants but are often difficult to calculate.
Other invariants of $2$-knots in $S^4$ include the Farber-Levine pairing and Casson-Gordon invariants, we refer to~\cite{ConwaySurvey} and to the references within for further information (as well as for a discussion of invariants of smooth $2$-knots).
The reason for which we do not dwell on these latter invariants is again that, by Freedman, the only simple knot in $S^4$ is the unknot.
Instead we move on to intersection forms on the homology of (branched) coverings of sphere exteriors.

\begin{construction}
\label{cons:Forms}
Let $S \subset X$ be a sphere of divisibility $d$, meaning that $d \geq 0$ is the largest integer dividing~$[S] \in H_2(X) \cong \Z^{b_2(X)}$.
As noted in the exercises, this implies that $H_1(X_S) \cong \Z_d$ and thus, if $S$ is simple,  that $\pi_1(X_S)\cong \Z_d$.
\begin{itemize}
\item If $S$ has divisibility $d \neq 0$,  then $H_1(X_S) \cong \Z_d$ and the intersection form~$Q_{\Sigma_d(S)}$ of the~$d$-fold branched cover $\Sigma_d(S)$ is an invariant of $S$.
In order to keep track of the action of the deck transformation group $\Z_d=\langle T \mid T^d=1\rangle$, it is also convenient to consider $H_2(\Sigma_d(S))$ as a $\Z[\Z_d]$-module and to consider the hermitian \emph{equivariant intersection form}
\begin{align*}
\lambda_{\Sigma_d(S)} \colon H_2(\Sigma_d(S)) \times H_2(\Sigma_d(S)) &\to \Z[\Z_d] \\
(x,y) &\mapsto \sum_{k=0}^{d-1} Q_{\Sigma_d(S)}(x,T^ky) T^{-k}.
\end{align*}
Here,  being \emph{hermitian} means that~$\lambda_{\Sigma_d(S)}(y,x)=\overline{\lambda_{\Sigma_d(S)}(x,y)}$, where the overline refers to replacing all instances of $T$ by $T^{-1}$.
As an exercise, the reader can calculate $H_i(\Sigma_d(S))$ as an abelian group and verify that, when $S$ is simple,  the pairings~$Q_{\Sigma_d(S)}$ and $\lambda_{\Sigma_d(S)} $ are nonsingular.
More challenging is the fact that if~$S$ is simple, then $H_2(\Sigma_d(S))$ is free as a $\Z[\Z_d]$-module~\cite[Theorem 3.4]{LeeWilczyOdd}.
An Euler characteristic calculation then shows that~$H_2(\Sigma_d(S)) \cong \Z[\Z_d]^{b_2(X)}.$
 \item If $S \subset X$ is nullhomologous, then $H_1(X_S)\cong \Z$,  and writing $\Z \cong \langle T \rangle$ for the deck transformation group of the infinite cyclic cover $X_S^\infty$ of $X_S$,  the \emph{equivariant intersection form} of $X_S$ refers to the hermitian form
 \begin{align*}
\lambda_{X_S} \colon H_2(X_S^\infty) \times H_2(X_S^\infty) &\to \Z[\Z] \\
(x,y) &\mapsto \sum_{k \in \Z} (x \cdot T^ky) T^{-k}.
\end{align*}
If $S \subset X$ is a $\Z$-sphere, then the module $H_2(X_S^\infty)$ is free of rank $b_2(X)$ as a $\Z[\Z]$-module~\cite[Lemma 3.2]{ConwayPowell}: $H_2(X_S^\infty) \cong \Z[\Z]^{b_2(X)}$.
One can additionally verify that~$\lambda_{X_S} $ is nonsingular~\cite[Lemma 3.2]{ConwayPowell}.
These facts feature as suggested exercises.
\end{itemize}
\end{construction}

We note that both $\lambda_{\Sigma_d(S)}$ and $\lambda_{X_S}$ \emph{augment} to the intersection form of $X$ (denoted~$Q_X$),  meaning that if one sets $T=1$ in matrices representing these forms,  then one obtains a matrix representing~$Q_X$.
More formally, this means that for $d\geq 0$, if one endows $\Z$ with the $\Z[\Z_d]$-module structure given by $Tx=x$ for $x \in \Z$, then $(H_2(X_S^\infty),\lambda_{X_S})\otimes_{\Z[\Z]} \Z \cong (H_2(X),Q_X) $ and for $d \neq 0$,  similarly,~$(H_2(\Sigma_d(S)),Q_{\Sigma_d(S)}) \otimes_{\Z[\Z_d]} \Z \cong (H_2(X),Q_X)$.

\begin{example}
We calculate the equivariant intersection form for $S \subset \C P^2$ a divisibility $d$ simple sphere.
Without loss of generality, we assume that $[S]=d \in \Z \cong H_2(\C P^2)$ is nonnegative,  since the equivariant intersection form is insensitive to reversing the orientation of the sphere.
\begin{enumerate}
\item If $d=1$, then $\Sigma_d(S)=\C P^2$ so the intersection form is represented by $(1)$.
\item If $d=2$, then $H_2(\Sigma_2(S)) \cong \Z[\Z_2]$ and since the equivariant intersection is hermitian, nonsingular and augments to $Q_{\C P^2}\cong (1)$,  we deduce that $\lambda:=\lambda_{\Sigma_2(S)}$ is either represented by~$(1)$ or~$(T)$.
A little work shows that $\Sigma_2(S)$ is spin (see~\cite[Lemma 4.5 (2)]{LeeWilczy} for a vastly more general statement) and thus, if $y \in H_2(\Sigma_2(S)) \cong \Z[\Z_2]$ is a $\Z[\Z_2]$-generator, then~$\lambda(y,y)=y \cdot y + (y \cdot Ty)T=2n+(y \cdot Ty)T$ for some $n \in \Z$.
Since we saw that~$\lambda(y,y)$ is either $1$ or $T$, we deduce that~$\lambda$ is represented by~$(T)$.
\item If $d=0$, then $H_2(X_S;\Z[\Z])\cong \Z[\Z]$ and since the equivariant intersection is hermitian, nonsingular and augments to $Q_{\C P^2}\cong (1)$ we deduce that it is represented by $(1)$.
\end{enumerate}
\end{example} 

\begin{remark}
When $S \subset X$ is a divisibility $d$ sphere with $d \neq 0$,  we will need a slight refinement of the equivariant intersection form.
Indeed, it will be helpful to consider~$\lambda_{\Sigma_d(S)}$ together with the (necessarily primitive if $S$ is simple) homology class $[\widetilde{S}] \in H_2(\Sigma_d(S))$ of the lift of $S$ to the branched cover.
An isometry of such \emph{pointed hermitian forms} $(P,\lambda,z)$ and $(P',\lambda',z')$ consists of an isometry~$(P,\lambda) \cong (P',z')$ of the underlying hermitian forms that takes $z$ to $z'$.
For brevity,  we will often say that $\lambda$ and $\lambda'$ are \emph{isometric as pointed hermitian forms}.
\end{remark}

Before delving further into the study of simple spheres,  we conclude with some remarks on the general study of knotted spheres in $4$-manifolds.
Arguably, one challenge in this topic is the relative lack of problems beyond those related to concordance and isotopy.
Some themes of interest include developing suitable notions of unknotting,  the study of ribbon $2$-knots,  and stable phenomena, i.e. studying properties of $2$-knots up to internal or external stabilisation.

\section{Lecture two: Simple spheres up to isotopy}
\label{sec:2}

During this lecture, we describe instances in which complete invariants of simple spheres are available.
The first result of this type is due to Freedman, who proved that~$\Z$-spheres in $S^4$ are unknotted~\cite{FreedmanICM}.
The next result describes what is known about simple spheres more generally.

\begin{theorem}
\label{thm:InvariantsSuffice}
Let $X$ be a closed,  simply-connected $4$-manifold.
\begin{enumerate}
\item Lee-Wilczy\'nski~\cite{LeeWilczyOdd,LeeWilczy}: For $d\neq 0$,  two homologous simple spheres~$S_0,S_1 \subset X$ of divisibility $d$ are equivalent if and only if their pointed equivariant intersection forms are isometric, i.e.~$\lambda_{\Sigma_d(S_0)} \cong \lambda_{\Sigma_d(S_1)}$ as pointed hermitian forms.
\item Two $\Z$-spheres $S_0,S_1 \subset X$ are equivalent if and only if their equivariant intersection forms are isometric, i.e.  $\lambda_{X_{S_0}} \cong \lambda_{X_{S_1}}$~\cite[Theorem 1.4]{ConwayPowell}.
\end{enumerate}
\end{theorem}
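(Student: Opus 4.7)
The ``only if'' direction in both parts is immediate: an equivalence $\Phi\colon X \to X$ with $\Phi(S_0)=S_1$ restricts to a homeomorphism of the exteriors $X_{S_i}$, which lifts to the appropriate covering space (the $d$-fold branched cover $\Sigma_d(S_i)$ in~(1), the infinite cyclic cover of $X_{S_i}$ in~(2)) sending one distinguished class to the other, thereby producing the desired isometry. The content is the converse.

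My plan for the converse is uniform across the two cases. Starting from the isometry of (pointed) equivariant intersection forms, I would first build a homotopy equivalence $f\colon X_{S_0}\to X_{S_1}$ that restricts to a standard homeomorphism on $\partial X_{S_i}\cong S^2\times S^1$. Since, as recorded in Construction~\ref{cons:Forms}, the relevant second homology of the cover is $\Z[\pi]$-free with $\pi=\Z_d$ or $\Z$, equivariant Poincar\'e duality together with a handle / Whitehead-style argument shows that the cover has the equivariant chain-homotopy type of a CW complex whose $2$-cell data is exactly the given equivariant intersection form; the form isometry then lifts to an equivariant homotopy equivalence, and the pointed datum ensures compatibility along the meridian. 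The next step is to promote $f$ to a homeomorphism via topological surgery. Because $\Z_d$ and $\Z$ are good in the sense of Freedman--Quinn, the topological surgery exact sequence for $(X_{S_i},\partial X_{S_i})$ is available, and in favourable cases $f$ is $s$-cobordant rel boundary to a homeomorphism. Gluing back the tubular neighbourhood $S^2\times D^2$ then produces an ambient homeomorphism $\Phi\colon X\to X$ carrying $S_0$ to $S_1$.

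The main obstacle is the surgery step, specifically controlling the $L_5(\Z[\pi])$-valued surgery obstruction. In case~(2) this is substantially eased by $L_5(\Z[\Z])=0$, so that after a routine normal-invariant analysis one concludes directly; this is essentially the route taken in~\cite[Theorem 1.4]{ConwayPowell}. In case~(1), the groups $L_5(\Z[\Z_d])$ can carry nontrivial torsion, and the crux of Lee--Wilczy\'nski's argument is to absorb any such obstruction by modifying $f$ through a self-equivalence of $X_{S_1}$ realising an automorphism of $\lambda_{\Sigma_d(S_1)}$ (Wall realisation in the equivariant setting). The pointed hypothesis plays a central role both in ensuring that the homotopy equivalence extends across the meridional $S^2\times D^2$ and in pinning down the precise element in the $L$-group that must be killed. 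A final bookkeeping check verifies that the constructed homeomorphism $X_{S_0}\to X_{S_1}$ is compatible with the boundary parametrisation, so that gluing in the tube yields a genuine ambient homeomorphism of $X$ rather than merely of the exteriors.
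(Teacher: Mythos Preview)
Your high-level strategy (build a rel-boundary homotopy equivalence of exteriors, then run topological surgery) matches the paper, but the surgery step is where your account goes wrong, and in a way that inverts the two cases.

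First, the $L$-group facts are backwards. For finite cyclic $\pi=\Z_d$ one has $L_5^s(\Z[\Z_d])=0$, so there is no $L_5$-obstruction to absorb in case~(1); by contrast $L_5(\Z[\Z])\cong L_4(\Z)\oplus L_5(\Z)\cong \Z$ via Shaneson splitting, so your claim that case~(2) ``is substantially eased by $L_5(\Z[\Z])=0$'' is false. In the paper's argument, injectivity of the normal-invariant map $\eta$ on $\mathcal{S}^s(X_{S_1},\partial X_{S_1})$ is obtained from $L_5^s(\Z[\Z_d])=0$ in case~(1) and from surjectivity of $\sigma_5$ in case~(2).

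Second, once $\eta$ is injective the remaining task in \emph{both} cases is to kill the normal invariant $\eta(f)\in H^2(X_{S_1},\partial X_{S_1};\Z_2)$, and this is not done by Wall realisation of form automorphisms. The self-equivalences used lie in $\Aut^s_{\id}(X_{S_1})$: they are the identity on $\pi_1$ and on $\pi_2$ (hence induce the identity on $\lambda_{\Sigma_d(S_1)}$), but have prescribed nonzero normal invariant. They are produced by the Novikov pinching construction, and one composes with such an $h$ to achieve $\eta(h\circ f)=0$, whence $h\circ f$ is homotopic rel boundary to a homeomorphism. Your proposed mechanism---realising an automorphism of the equivariant form---would change $F$ and does not address the normal invariant, which is the genuine obstruction here.
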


No such results appear to be available for nonabelian knot groups, but we briefly mention some partial results.
Hillman proved that ribbon~$2$-knots in~$S^4$ with knot group the Baumslag-Solitar~$BS(1,2)=\langle a,b \mid aba^{-1}=b^2\rangle$ are determined by the homotopy type of~$M_K$, the surgery on~$K$~\cite[Corollary on page 140]{Hillman2knotgroups}, and therefore (thanks to~\cite{HambletonKreckTeichner}) by the equivariant intersection form of the latter.
We also refer to~\cite[Theorem 2]{Hillman2knotgroups} for related results.

\begin{remark}
\label{rem:Cancellation}
We record a couple of remarks on Theorem~\ref{thm:InvariantsSuffice}.
\begin{itemize}
\item Lee-Wilczy\'nski~\cite{LeeWilczyOdd,LeeWilczy} describe cases where the existence of a pointed isometry~$\lambda_{\Sigma_d(S_0)} \cong \lambda_{\Sigma_d(S_1)}$ is automatic,  and where equivalence can be replaced by isotopy, e.g. when~$d=1$
and when~$X$ satisfies both~$b_2(X)>6$ and
\begin{equation}
\label{eq:UniquenessIneq}
 b_2(X) > \underset{0 \leq j <d}{\operatorname{max}} \ \Big| \sigma(X)-\frac{2j(d-j)}{d^2}x\cdot x \Big|.
 \end{equation}
Under certain additional hypotheses, Lee and Wilczy\'nski also obtain such  uniqueness results when~$3 \leq b_2(X) \leq 6$~\cite[Addendum 1]{LeeWilczy}; see also~\cite[Theorem~4.5]{HambletonKreck2} for the case when~$b_2(X)>|\sigma(X)|+2$ and~\eqref{eq:UniquenessIneq} holds.
These results rely on difficult algebra related to cancellation problems in the theory of quadratic forms.
The case $d=1$ was also obtained in~\cite[Theorem~G]{BoyerRealization} (for surfaces of arbitrary genus) by different methods.
We also note that the inequality in~\eqref{eq:UniquenessIneq} rules out $X$ being definite.
\item In the second statement,  work of Hambleton-Teichner~\cite{HambletonTeichner} shows that $\lambda_{X_{S_0}} \cong \lambda_{X_{S_1}}$ is automatic provided~$b_2(X) \geq \sigma(X)+6$, thus ensuring that $S_0$ and $S_1$ are necessary equivalent.
In this setting, Sunukjian claims that isotopy can be achieved~\cite[Theorem~7.2]{Sunukjian}.
We also note that the second statement of Theorem~\ref{thm:InvariantsSuffice} holds for closed $\Z$-surfaces of arbitrary genus~\cite[Theorem 1.4]{ConwayPowell}.
\end{itemize}
In both statements,  work of Quinn~\cite{Quinn} ensures that equivalence can be improved to isotopy provided there is an isometry $F$ such that the ``augmented isometry" $F(1) \colon H_2(X) \to H_2(X)$ is the identity.
We spell this out in the case $d=0$ (the case $d \neq 0$ is similar but with pointed isometries): if there is an isometry~$F \colon \lambda_{S_0} \cong  \lambda_{S_1}$ 
such that the augmented isometry:
$$F(1):=F \otimes \id_\Z \colon \overbrace{H_2(X;\Z[\Z]) \otimes_{\Z[\Z]} \Z}^{\cong H_2(X)} \to \overbrace{H_2(X;\Z[\Z])  \otimes_{\Z[\Z]} \Z}^{\cong H_2(X)}$$
is the identity, then $S_0$ and $S_1$ are isotopic.
Here,  Quinn's theorem states that if a homeomorphism~$X\to X$ induces the identity on $H_2$, then it is isotopic to the identity.
\end{remark}

Contrarily to what the previous remark might suggest,~$\Z$-spheres in a closed simply-connected~$4$-manifold need not be equivalent.
For example, it is known that there is a nontrivial~$\Z$-sphere in~$\#_4 \C P^2$~\cite[Theorem~C]{Torres}; this also follows by combining Theorem~\ref{thm:Realisation} with~\cite[Theorem~1]{HambletonTeichner}.
We note in passing that the situation for surfaces with boundary is very different.
For example, a knot can bound infinitely many $\Z$-discs in~$\C P^2 \setminus \operatorname{Int}(B^4)$ up to isotopy rel. boundary,  and the disc exteriors all have isometric equivariant intersection form~\cite{ConwayDaiMiller}.

\begin{example}
We have already seen that Freedman's theorem implies that, up to isotopy, there is a single simple sphere in $S^4$, namely the unknot.
Applying Theorem~\ref{thm:InvariantsSuffice} together with a little additional work shows that homologous simple spheres in $\C P^2$ are isotopic~\cite{ConwayOrson}.
In particular, combined with the work of Tristram that was mentioned in Example~\ref{ex:Examples2Knots}, this implies that up to isotopy, there are $5$ simple spheres in $\C P^2$.
In more detail, Theorem~\ref{thm:InvariantsSuffice} and Remark~\ref{rem:Cancellation} give the result for $d=\pm 1$; for $d=0$, Theorem~\ref{thm:InvariantsSuffice} implies that homologous spheres are equivalent and an additional argument from~\cite[Appendix]{ConwayOrson} establishes isotopy; for $d=\pm 2$, this is the main result of~\cite{ConwayOrson}, though this might also follow from Theorem~\ref{thm:InvariantsSuffice} provided one is able to pin down the \emph{pointed} isometry type of the equivariant intersection form.
\end{example}

We conclude with a proof outline of Theorem~\ref{thm:InvariantsSuffice}.

\begin{proof}[Proof outline of Theorem~\ref{thm:InvariantsSuffice}]
Implicit in Construction~\ref{cons:Forms} is the fact that the equivariant intersection form is an invariant of knotted spheres, i.e. that equivalent knotted spheres have isometric equivariant intersection forms.
This establishes one direction of (each assertion of) the theorem and we therefore focus on the converses.
We begin with a brief proof outline of both results.

When $d \neq 0$, the idea is to show that any pointed isometry~$F \colon \lambda_{\Sigma_d(S_0)} \cong \lambda_{\Sigma_d(S_1)}$ is realised by an equivariant homeomorphism $(\Sigma_d(S_0),\overline{\nu}(\widetilde{S}_0)) \to (\Sigma_d(S_1),\overline{\nu}(\widetilde{S}_1))$ taking $\widetilde{S}_0$ to $\widetilde{S}_1$ or,  equivalently (by passing to orbit spaces),  by a homeomorphism $(X,\overline{\nu}(S_0)) \to (X,\overline{\nu}(S_1))$ taking $S_0$ to $S_1$.
If such a homeomorphism realises $F$, then on second homology it induces
$$F(1) :=F\otimes \id_\Z \colon \overbrace{H_2(\Sigma_d(S_0)) \otimes_{\Z[\Z_d]} \Z}^{\cong H_2(X)} \to \overbrace{H_2(\Sigma_d(S_1)) \otimes_{\Z[\Z_d]} \Z}^{\cong H_2(X)}.$$
To obtain an ambient isotopy instead of merely an equivalence of surfaces,  the aim is to show that~$F$ can be chosen so that~$F(1)=\id_{H_2(X)}$ and to then appeal to Quinn~\cite{Quinn}.

When~$d=0$, the idea is to show that an isomety~$F \colon \lambda_{X_{S_0}} \cong \lambda_{X_{S_1}}$ can be realised by a homeomorphism~$X_{S_0} \to X_{S_1}$ whose restriction to~$\partial X_{S_0} \to \partial X_{S_1}$ extends to a homeomorphism~$\overline{\nu}(S_0) \to \overline{\nu}(S_1)$ taking $S_0$ to $S_1$.
It then follows that this homeomorphism gives rise to the required homeomorphism~$(X,S_0) \to (X,S_1)$; again appealing to Quinn~\cite{Quinn}, one sees that if~$F(1)=\id_{H_2(X)}$, then the spheres are isotopic.

Thus, in both cases, the strategy involves:
\begin{enumerate}
\item Choosing a homeomorphism $f \colon \overline{\nu}(S_0) \to \overline{\nu}(S_1)$ taking $S_0$ to $S_1$.
\item Showing that $f|_{\partial}$ extends to a homotopy equivalence $f \colon X_{S_0} \to X_{S_1}$ that realises $F$.
\item Using surgery theory to find a homeomorphism $\Phi \colon X_{S_0} \to X_{S_1}$ with $\Phi|_{\partial}=f|_{\partial}$ and that still realises $F$.
\end{enumerate}
When $d=0$,  the strategy above doesn't follow the original proof from~\cite{ConwayPowell} but is made possible by upcoming joint work with Daniel Kasprowski on the homotopy classification of $4$-manifolds with a given boundary.

The first of these three steps isn't overly challenging and the second is an intricate obstruction theoretic argument with roots in~\cite[Proof of Theorem 1.1]{HambletonKreck}.
Following~\cite[Proof of Proposition 4.6]{LeeWilczy}, we therefore focus on the third.\footnote{This was also a way to connect with the other minicourses that had been building up surgery theory.}

We begin with the case where $d\neq 0$.
Set $\pi:=\Z_d$ and~$X_i:=X_{S_i}$ for brevity.
We assume that there is a homotopy equivalence $f \colon X_0 \to X_1$ that extends to a homeomorphism $\overline{\nu}(S_0) \to \overline{\nu}(S_1)$ taking $S_0$ to $S_1$ and whose extension to $\Sigma_d(S_0) \to \Sigma_d(S_1)$ realises the isometry~$F$. 
It is known that $f$ is automatically a simple homotopy equivalence (this is because the Whitehead torsion of $f$ lies in $SK_1(\Z[\pi])$ and the latter vanishes for cylic groups),  thus ensuring that $f$ defines an element in the simple structure set $\mathcal{S}^s(X,\partial X)$.
Consider the following portion of the surgery sequence (which is exact since $\Z_d$ is good~\cite{FreedmanQuinn}; see also~\cite[Chapter 22]{DET}):
$$\ldots \xrightarrow{\sigma_5} L_5^s(\Z[\pi]) \dashrightarrow \mathcal{S}^s(X_1,\partial X_1) \xrightarrow{\eta} \mathcal{N}(X_1,\partial X_1) \xrightarrow{\sigma_4} L_4^s(\Z[\pi]).$$
The plan is to show that there is a simple homotopy equivalence $h \colon X_1 \to X_1$ that restricts to the identity on $\partial X_1$,  induces the identity on $\pi_1$ and $\pi_2$, and is such that $h \circ f$ is homotopic to a homeomorphism.

In other words,  we consider the group~$\Aut^s_{\id}(X_1)$ of rel. boundary homotopy classes of such simple homotopy equivalences, the action~$\Aut^s_{\id}(X_1) \curvearrowright \mathcal{S}^s(X_1,\partial X_1)$ by composition, and aim to show that~$\mathcal{S}^s_{\ks}(X_1,\partial X_1)/\Aut^s_{\id}(X_1)$ is trivial, where 
$$\mathcal{S}^s_{\ks}(X_1,\partial X_1)=\{ [f \colon (M,\partial M) \to (X_1,\partial X_1)] \in \mathcal{S}^s(X_1,\partial X_1)  \mid \ks(M)=\ks(X_1) \rbrace.$$
For any group $\pi$,  it is known that~$\mathcal{N}(X_1,\partial X_1)\cong H^4(X_1,\partial X_1) \oplus H^2(X_1,\partial X_1;\Z_2)$; see e.g.~\cite[Section 22.3.2]{DET}.
When~$\pi=\Z_d$ is finite cyclic,  it is additionally known that~$L_5^s(\Z[\pi])=0$ and~$\ker(\sigma_4) \cong H^2(X_1,\partial X_1;\Z_2)$  so that~$\eta$ induces a bijection~$\eta \colon \mathcal{S}^s(X_1,\partial X_1) \xrightarrow{\cong} H^2(X_1,\partial X_1;\Z_2)$.
References for these deep results can be found in~\cite[Proof of Proposition 4.6]{LeeWilczy} as can the fact
that~$\eta$ restricts to a bijection
$$  \eta| \colon  \mathcal{S}^s_{\ks}(X_1,\partial X_1) \xrightarrow{\cong} \{ x \in H^2(X_1,\partial X_1;\Z_2) \mid x \cup w_2(X_1)=0\rbrace.$$
A construction that goes back to Novikov (see~\cite{CochranHabegger} for the details and to~\cite[Lemmas 4.14 and 4.15]{LeeWilczy} for the case~$\pi=\Z/2k$) shows that when $\pi$ is cyclic,  every~$x$ in the target of this map can be realised as~$\eta(h)$ for some 
homotopy equivalence~$h \in \Aut^s_{\id}(X_1)$, see e.g.~\cite[Lemma~4.10]{LeeWilczy}.

Applying this with~$x=\eta(f)$, further properties of this ``Novikov pinching" construction show that~$\eta(h \circ f)=0$~\cite[Lemma 4.9]{LeeWilczy}.
Since $\eta|$ is a bijection, ~$h \circ f \sim \id \in \mathcal{S}^s(X_1,\partial X_1)$, meaning that~$h \circ f$ is homotopic rel. boundary to the required homeomorphism~$\Phi$.
For~$\pi=\Z$, the argument is similar but,  this time,  one uses that~$\operatorname{Wh}(\pi)=0$
and that~$\sigma_5$ is surjective.
\end{proof}

\section{Lecture three: Enumerating simple spheres}
\label{sec:3}

The previous lecture stated criteria ensuring that homologous $\Z_d$-spheres are isotopic.
In order to enumerate simple spheres in a given simply-connected closed $4$-manifold $X$, it therefore remains to know which homology classes can be represented by simple spheres and which (pointed) hermitian forms arise as (pointed) equivariant intersection forms.
The following theorem summarises what is known on the subject.

\begin{theorem}
\label{thm:Realisation}
~
\begin{itemize}
\item Lee-Wilczy\'nski~\cite{LeeWilczyOdd,LeeWilczy}: a nonzero divisibility $d$ class $x \in H_2(X)$ is represented by a simple sphere if and only if 
\begin{equation}
\label{eq:ExistenceLW}
 b_2(X) \geq  \underset{0 \leq j <d}{\operatorname{max}} \ \Big| \sigma(X)-\frac{2j(d-j)}{d^2}x\cdot x \Big|
 \end{equation}
and, when $x$ is characteristic,  $x$ additionally satisfies
\begin{equation}
\label{eq:ExistenceCharacteristic}
\ks(X)=\frac{1}{8}(\sigma(X)-x\cdot x).
\end{equation}
\item Given a nonsingular hermitian form $\lambda(t)$ over $\Z[\Z]$,  there exists a $\Z$-sphere $S \subset X$ with equivariant intersection form $\lambda(t)$ if and only if~$\lambda(1) \cong Q_X$~\cite{ConwayPiccirilloPowell}.
\end{itemize}
\end{theorem}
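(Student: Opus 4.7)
The forward implication follows from the augmentation property recorded after Construction~\ref{cons:Forms}: any $\Z$-sphere $S \subset X$ has equivariant intersection form satisfying $\lambda(1) \cong Q_X$. For the converse, given a nonsingular hermitian form $\lambda$ over $\Z[\Z]$ with $\lambda(1) \cong Q_X$, the plan is to realise $\lambda$ as the equivariant intersection form of a sphere exterior, and then to identify the closed-up manifold with $X$ via Freedman's classification.

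More precisely, the first step is to build a compact topological $4$-manifold $N$ with $\partial N \cong S^2 \times S^1$, with $\pi_1(N) \cong \Z$ generated by a meridian of the boundary sphere, and with equivariant intersection form on $H_2$ of the universal cover isometric to $\lambda$. A natural starting point is $S^1 \times D^3$, which already has $\pi_1 = \Z$ and $\partial = S^1 \times S^2$; one then attaches $b_2(X)$ framed two-handles along nullhomotopic curves in $S^1 \times S^2$ whose equivariant self-framings and equivariant linking numbers realise the entries of a fixed matrix representative of $\lambda$. Since the attaching curves are nullhomotopic, the fundamental group stays $\Z$, and an equivariant Mayer-Vietoris computation in the universal cover confirms $H_2(\widetilde N) \cong \Z[\Z]^{b_2(X)}$ with the prescribed pairing.

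With $N$ in hand, the second step is to cap off $\partial N = S^2 \times S^1$ by gluing in $S^2 \times D^2$. The resulting manifold $Y$ is closed and simply-connected (the meridian loop that generated $\pi_1(N)$ bounds a disc in the cap), and the core $S^2 \times \{0\}$ sits inside $Y$ as a $\Z$-sphere $S$ whose exterior is~$N$. By construction, the equivariant intersection form of $S$ is $\lambda$, and $Q_Y \cong \lambda(1) \cong Q_X$. After arranging that $\ks(Y) = \ks(X)$ (which may require an appropriate choice of building block when assembling $N$, since the naive smooth handle construction above produces $\ks(N) = 0$), Freedman's classification produces a homeomorphism $Y \cong X$ carrying $S$ to the desired $\Z$-sphere.

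The main obstacle is the handle-theoretic realisation. A diagonal entry of the form $\lambda_{ii}(t) = a_0 + \sum_{k>0} a_k(t^k + t^{-k})$ with some $a_k \neq 0$ for $k>0$ forces the $i$th attaching curve to equivariantly link its own deck translates in the universal cover, which is subtler than prescribing framings of disjoint nullhomotopic unknots. I expect the cleanest route is to first reduce $\lambda$ via equivariant basis changes to a normal form that decomposes into elementary pieces (such as hyperbolic summands together with pieces whose off-diagonal entries carry the Laurent data), each of which admits an explicit Kirby-diagram realisation, with nonsingularity of $\lambda$ making such a reduction tractable.
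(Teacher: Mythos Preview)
Your strategy---build a model exterior $N$ with $\partial N\cong S^2\times S^1$ and equivariant form $\lambda$, cap with $S^2\times D^2$, then invoke Freedman---is exactly the shape of the paper's argument, but you have a genuine gap at the step you yourself flag.

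When you attach $b_2(X)$ two-handles to $S^1\times D^3$ along nullhomotopic curves in $S^1\times S^2$ with equivariant linking/framing data prescribed by $\lambda$, the resulting $4$-manifold does acquire the correct $\pi_1$ and equivariant intersection form, but its boundary is the surgered $3$-manifold, not $S^2\times S^1$.  There is no reason for the surgery to return $S^2\times S^1$, and your proposed fix (reduce $\lambda$ to a normal form so that each elementary piece has an explicit Kirby realisation with controlled boundary) would amount to a cancellation theorem for nonsingular hermitian forms over $\Z[\Z]$ that is not available.

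The paper resolves this differently.  One attaches the handles equivariantly to $(S^2\times S^1)\times[0,1]$, obtaining a cobordism $(W;S^2\times S^1,Y)$.  Nonsingularity of $\lambda$ is used not to normalise the form but to show that $H_1(Y^\infty)=0$.  A surgery-theoretic argument, parallel to Freedman's proof that Alexander-polynomial-one knots are topologically slice, then shows that any such $Y$ bounds a $4$-manifold $B$ with $B\simeq S^1$.  Setting $M:=W\cup_Y B$ gives the desired model exterior with $\partial M=S^2\times S^1$; in the odd case one passes to the star partner if necessary to match $\ks$.  So the missing ingredient is not a normal form for $\lambda$, but this $\Z[\Z]$-analogue of ``homology spheres bound contractible manifolds'' used to cap the uncontrolled end.
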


Here,  a class $x \in H_2(X)$ is \emph{characteristic} if $x \cdot a=a \cdot a$ mod $2$ for every $a \in H_2(X)$ and is \emph{ordinary} otherwise.
For~$x$ characteristic,~$\sigma(X)-x \cdot x$ is divisible by~$8$ because the intersection form of $X$ is nonsingular; see e.g.~\cite[Proposition~1.2.20]{GompfStipsicz}.

\begin{remark}
We record a couple of remarks on Theorem~\ref{thm:Realisation}.
\begin{itemize}
\item Lee and Wilczy\'nski first proved their theorem for $d$ odd~\cite{LeeWilczyOdd}, whereas the result for $d$ even was obtained independently by Lee-Wilczy\'nski~\cite{LeeWilczy} and Hambleton-Kreck~\cite[Theorem 2]{HambletonKreck2}.
The statement for $d=1$ was also obtained by Boyer in~\cite[Theorem G]{BoyerRealization} for surfaces of arbitrary genus.
\item  The form obtained by extending $Q_X$ to a form over $\Z[\Z]$ is the archetypal example of a form as in the second statement of Theorem~\ref{thm:Realisation}.
In fact,  when $b_2(X) \geq |\sigma(X)|+6$, by~\cite[Theorem 2]{HambletonTeichner},  it is the only such form.
\end{itemize}
\end{remark}

\begin{remark}
The combination of Theorems~\ref{thm:InvariantsSuffice} and~\ref{thm:Realisation} shows that if a primitive class~$x \in H_2(X)$ is representable by a sphere, then this sphere is unique up to isotopy.
\end{remark}

Unfortunately,  due to the discrepancy between the inequalities~\eqref{eq:ExistenceLW} and~\eqref{eq:UniquenessIneq}, the combination of these theorems is often insufficient to enumerate all simple spheres in a given~$X$: no uniqueness statement is available for nonzero divisibility $d$ classes $x \in H_2(X)$
that are spherically representable but for which $ b_2(X) = \underset{0 \leq j <d}{\operatorname{max}} \ \Big| \sigma(X)-\frac{2j(d-j)}{d^2}x\cdot x \Big|$.

\begin{example}
\label{ex:Enumeration}
We apply Theorems~\ref{thm:InvariantsSuffice} and~\ref{thm:Realisation} to argue that there are infinitely many isotopy classes of simple spheres in~$X=(S^2 \times S^2)\# (S^2 \times S^2)$ (note~$b_2(X) >|\sigma(X)|+2$),  one for each spherically representable homology class, except possibly in the nullhomologous class.
For existence,  a calculation shows that a nonzero class~$(a,b,c,e) \in \Z^4 \cong H_2(X)$ satisfies~\eqref{eq:ExistenceLW} if and only if it is primitive or~$(ab+ce)=0$ (in which case the strict inequality~\eqref{eq:UniquenessIneq} actually holds),  that~$(a,b,c,e)$ is characteristic if and only if~$a,b,c,e$ are all even, 
and that such characteristic classes satisfy~\eqref{eq:ExistenceCharacteristic} if and only if~$ab+ce$ is even.
Thus, by Theorem~\ref{thm:Realisation}, the nonzero spherically realisable classes are the primitive ones as well as those with~$ab+ce=0$; in both cases the spheres are unique thanks to Theorem~\ref{thm:InvariantsSuffice}.
In the nullhomologous case,  the combination of Theorem~\ref{thm:InvariantsSuffice} and Theorem~\ref{thm:Realisation}
implies that there is a unique~$\Z$-sphere for every nonsingular hermitian form~$\lambda(t)$ with~$\lambda(1) \cong \bsm 0& 1 \\ 1&0 \esm^{\oplus 2}$.
It is unknown how many such forms exist.

In summary, isotopy classes of simple spheres in $X$ correspond bijectively to 
$$
 \left\{ x \in \Z^4 \text{ primitive} \right\}
 \sqcup \bigsqcup_{d \geq 2}   \left\{ d \bsm a \\ b \\ c \\ e\esm \ \Big| \ ab+ce=0  \right\}
 \sqcup \frac{\left\{ \lambda(t) \text{ nonsing.  hermitian} \mid \lambda(1) \cong \bsm 0&1  \\ 1&0 \esm^{\oplus 2} \right\}}
 {\lambda_0(t) \cong \lambda_1(t) \text{ via $F$ such that $F(1)=\id_{H_2(X)}$} }.
$$

\end{example}

We conclude with a proof outline of Theorem~\ref{thm:Realisation}, focusing on the sufficiency of the conditions.

\begin{proof}[Proof outline of Theorem~\ref{thm:Realisation} when $d =0$.]
We begin with the case of $\Z$-spheres.
We direct the reader to~\cite[Section 5]{ConwayPiccirilloPowell} for details and for references to facts mentioned without proof during this outline.
The necessity of the condition $\lambda(1) \cong Q_X$ was already alluded to below Construction~\ref{cons:Forms}.
For sufficiency,  the idea of the proof is to build a manifold that ressembles a sphere exterior and to then deduce the existence of the sphere.
Attach~$2$-handles equivariantly to the top of~$(S^2 \times  S^1) \times [0,1]$ according to the equivariant linking and framing dictated by~$\lambda$ resulting in a cobordism~$(W,S^2\times S^1,Y)$.
Argue that~$H_1(Y^\infty)=0.$
A surgery theoretic argument analogous to the one used for proving that Alexander polynonial one knots are slice shows that every such~$3$-manifold bounds a $4$-manifold~$B$ with~$B \simeq S^1$.
It follows that~$M:=W \cup_Y B$ is a~$4$-manifold with~$\pi_1 \cong \Z$, boundary~$S^1 \times S^2$, and equivariant intersection form~$\lambda_M \cong \lambda$.
In fact,~$\pi_1(S^1 \times S^2) \to \pi_1(M)$ is an isomorphism.
In the odd case, taking the star partner of $M$ to change its Kirby-Siebenmann invariant of $M$ if necessary,  one can assume that~$\ks(M)=\ks(X)$.
Form~$X':=M\cup (S^2 \times D^2)$ and consider the~$\Z$-sphere~$S:=S^2 \times \{0 \}.$

It remains to verify that~$X' \cong X$ as it will then follow that~$X$ contains a~$\Z$-sphere, namely the image of~$S$ under this homeomorphism.
First,  a van Kampen argument shows that~$X'$ is simply-connected.
Next, using the hypothesis on~$\lambda$, we obtain the sequence of isometries
$$Q_{X'} \cong \lambda_{X'}(1) \cong \lambda_M(1) \cong \lambda(1) \cong Q_X.$$
In the odd case,  the additivity of the Kirby-Siebenman invariant gives~$\ks(X')=\ks(M)=\ks(X)$.
Freedman's work now implies that~$X \cong X'$~\cite{Freedman}.
This concludes the construction of a~$\Z$-sphere~$S\subset X' \cong X$ whose exterior has equivariant intersection form~$\lambda$.
\end{proof}

\begin{proof}[Proof outline of Theorem~\ref{thm:Realisation} when $d \neq 0$.]
The necessity of the conditions in Lee and Wilczy\'nski's theorem can be verified using the work of Rochlin~\cite{Rochlin} (see also~\cite{FreedmanKirby} for~\eqref{eq:ExistenceCharacteristic}).
For sufficiency, the idea of the proof is to first
 show that the condition on the Arf invariant guarantees that,  for some~$k \geq 0$, the class~$x \oplus 0$ is represented by a simple sphere~$S \subset X \#_k S^2 \times S^2$~\cite[Theorem~2.1]{LeeWilczyOdd},
and to then use the main hypothesis of the theorem (namely~\eqref{eq:ExistenceLW}) to show that, informally speaking,  it is possible to surger away the~$S^2 \times S^2$-summands without damaging the sphere.

We elaborate on the first step, referring to~\cite[proof of Theorem~2.1]{LeeWilczyOdd} for further details.
Represent~$x \in H_2(X)$ by a surface~$F \subset X$.
The goal is to show that,  assuming~\eqref{eq:ExistenceCharacteristic} in the characteristic case,  it is possible to ambiently surger~$F$ to a sphere at the price of stabilising~$X$.
Consider the task of performing ambient surgery on an embedded loop~$\gamma \subset F$.
This requires the existence of an embedded disc~$D \subset X$ with~$\partial D=\gamma=D \cap F$ and such that disc framing on~$\gamma$ agrees with the surface-induced framing on it; see e.g.~\cite[page~508]{Scorpan} for details and illustrations.
Write~$W(D)\in \Z$ for the obstruction to this framing condition being realised. 
\begin{claim}
If~$W(D)$ and~$D \cdot F$ have the same parity, then ambient surgery along~$\gamma$ is possible in a stabilisation of $X$.
\end{claim}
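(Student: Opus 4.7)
The plan is to modify $D$, after replacing $X$ by a single stabilisation $X \# (S^2 \times S^2)$, into an embedded disc $D'$ with $\partial D' = \gamma$, $D' \cap F = \gamma$, and vanishing framing obstruction $W(D') = 0$; once such a $D'$ is produced, the ambient surgery on $F$ along $\gamma$ can be performed as in~\cite[page~508]{Scorpan}. Two ``atomic'' modifications are available. A \emph{boundary twist} on $D$ near $\gamma$ (introducing a local kink that wraps once around $\gamma$ inside $F$) simultaneously alters $W(D)$ and $D \cdot F$ by $\pm 1$, with matching sign, since the same geometric twist contributes both the framing change and one transverse intersection of $D$ with $F$. \emph{Tubing} $D$ with an embedded sphere $\Sigma$ in the new $S^2 \times S^2$ summand, via a small tube in a ball disjoint from $F$, alters $W(D)$ by $\Sigma \cdot \Sigma$ and leaves $D \cdot F$ unchanged.

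First I would perform $|D \cdot F|$ boundary twists, choosing the sign so as to cancel the interior intersections of $D$ and $F$. The result is an embedded disc $D_1 \subset X$ with $D_1 \cap F = \gamma$ and framing obstruction
\[
W(D_1) \;=\; W(D) \,\mp\, D \cdot F,
\]
which by the parity hypothesis is \emph{even}.

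Next I would stabilise to $X \# (S^2 \times S^2)$ and let $A, B$ denote the two obvious dual $2$-sphere classes, satisfying $A \cdot A = B \cdot B = 0$ and $A \cdot B = 1$. The classes $A \pm B$ are represented by embedded $2$-spheres of self-intersection $\pm 2$, contained in a small neighbourhood of the $S^2 \times S^2$ summand and therefore disjoint from $F$ and from $D_1$. Tubing $D_1$ a suitable signed number of times with parallel copies of such spheres changes $W$ by any prescribed even integer and leaves $D \cdot F$ unchanged; choosing this modification to equal $-W(D_1)$ yields an embedded disc $D' \subset X \# (S^2 \times S^2)$ with $\partial D' = \gamma$, $D' \cap F = \gamma$, and $W(D') = 0$, as required.

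The delicate point on which the whole argument turns, and which I expect to be the main obstacle, is the bookkeeping in the first step: one must verify carefully from the definition of $W$ in terms of sections of the normal bundle of $D$ along $\gamma$ that a single boundary twist does indeed change $W(D)$ and $D \cdot F$ by the \emph{same} $\pm 1$ (and not by independent signs or quantities of different magnitude), for it is precisely this matched behaviour that converts the mod-$2$ hypothesis into the evenness of $W(D_1)$ needed to close the argument by tubing with self-intersection $\pm 2$ spheres in the stabilisation.
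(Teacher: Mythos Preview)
Your overall strategy matches the paper's, but there is a genuine gap in the first step.  A boundary twist does not \emph{remove} an interior intersection of $D$ with $F$; it \emph{creates} a new transverse intersection of $D$ with $F$ near $\gamma$ (while simultaneously changing the framing obstruction by $\pm 1$).  After your $|D\cdot F|$ boundary twists, the \emph{algebraic} count $D_1\cdot F$ vanishes, but geometrically $D_1$ still meets $F$ in its interior --- indeed in at least as many points as before.  Your assertion that $D_1\cap F=\gamma$ is therefore unjustified, and your final disc $D'$ will likewise meet $F$ in its interior.

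The missing ingredient is the Whitney trick.  Once $D\cdot F=0$ algebraically and $W(D)=0$, the interior intersections of $D$ with $F$ come in cancelling pairs, and these pairs can be removed by Whitney moves; in a simply-connected $4$-manifold this cannot be done directly, but it \emph{can} be done after further stabilisation by copies of $S^2\times S^2$ (see e.g.\ \cite[page 150]{Scorpan}).  So the correct order is: boundary twist until $D\cdot F=0$ (hence $W(D)$ even), tube into the stabilisation to arrange $W(D)=0$ as you describe, and \emph{then} apply the stable Whitney trick to achieve $D\cap F=\gamma$ geometrically.  Note also that this last step may cost additional $S^2\times S^2$ summands, so the claim is only that ambient surgery is possible in \emph{some} stabilisation, not in a single one.
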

\begin{proof}
Performing a so-called \emph{boundary twist} on~$D$ (see e.g.~\cite[p.~15]{FreedmanQuinn}) changes both quantities by~$\pm 1$ so that, after sufficiently many such operations, it is possible to assume that~$D \cdot F=0$ and that~$W(D)$ is even.
It is then possible to change~$W(D)$ by any even number (without affecting~$D \cdot F)$ at the price of stabilisations; see e.g.~\cite[page 93]{FreedmanKirby} or~\cite[page 150]{Scorpan}.
Thus we can assume that both~$W(D)$ and~$D \cdot F$ vanish.
The Whitney trick can be performed stably (see e.g.~\cite[page 150]{Scorpan}), thus making it possible to arrange that~$D \cap F=\gamma$, as needed.
\end{proof}
It therefore remains to find an embedded disc~$D$ such that~$W(D)+D\cdot F$ is even.
It turns out that if the class~$x$ is ordinary, this is always possible, whereas in the characteristic case, the quantity~$q_F([\gamma]):=W(D)+D\cdot F$ leads to a genuine obstruction.

Thus, in the ordinary case,  it is possible to iteratively stably ambiently surger the curves in a symplectic basis of~$H_1(F)$ leading to a sphere $S$, whereas it turns out that in the characteristic case, this is also possible provided~$0=\operatorname{Arf}(q_F)=\ks(X) -1/8(\sigma(X)-x \cdot x)$.
Here,  $\operatorname{Arf}(q_F)$ denotes the Arf invariant of the quadratic form $q_F$ (it is far from obvious that $[\gamma]\mapsto q_F([\gamma])$ determines a quadratic form~\cite{FreedmanKirby}) and the second equality is due to Rochlin; see also~\cite{FreedmanKirby}.
Additional stabilisations make it possible to arrange for~$x \oplus 0$ to be stably representable by a \emph{simple} sphere (surger push offs of curves representing the commutators of the fundamental group of the sphere complement).
This concludes our outline of the first step of the proof.

\medbreak

Finally, we describe the idea underlying the second step of the proof; see~\cite{LeeWilczyOdd,LeeWilczy} for details.
The first step provides a simple sphere~$S \subset X \#^k S^2 \times S^2$ representing~$x\oplus 0$ for some~$k \geq 0$.
Arduous algebra coupled with~\eqref{eq:ExistenceLW} ensures the existence of a pointed isometry~$\lambda_{\Sigma_d(S)} \cong \lambda' \oplus \bsm 0&1 \\ 1&0 \esm^{\oplus k}$, where the hyperbolic form is pointed by the zero element, and~$\lambda'(1) \cong Q_X$.
Informally, the goal is now to surger away the hyperbolic summand without damaging the sphere $S$.
Writing~$X_S^d$ for the~$d$-fold cover of~$X_S$ so that~$\Sigma_d(S)=X_S^d \cup \overline{\nu}(S)$,  a Mayer-Vietoris argument shows that the~$0 \oplus \Lambda^{2k}$-summand lives in~$H_2(X_S^d) \cong \pi_2(X_S^d) \cong \pi_2(X_S)$.
Since $\pi_1(X_S)\cong \Z_d$ is good, Freedman's sphere embedding theorem then ensures the existence of~$k$ pairs of framed transverse spheres in~$X_S$~\cite{Freedman}; see also~\cite[page 292]{DET}.
Surgering~$X_S$ along these framed spheres produces a~$4$-manifold~$M$ with~$\pi_1(M)\cong \Z_d$.
This way,~$X':=M \cup \overline{\nu}(S)$ is simply-connected,  contains a simple sphere~$S'$ and,  since short arguments show that $\ks(X')=\ks(M)=\ks(X_S)=\ks(X)$ and that there is a pointed isometry~$Q_{X'} \cong \lambda_{\Sigma_d(S')}(1) \cong \lambda'(1) \cong Q_X$,  the work of Freedman~\cite{Freedman} implies that there is a homeomorphism~$X'\cong X$ taking~$[S']$ to~$[S]$, as required.
\end{proof}

\section*{Exercises}
Fix a closed, connected, simply-connected $4$-manifold $X$.
\begin{enumerate}
\item Calculate the homology of sphere exteriors in~$X$; deduce that (a) if $\pi_1(X \setminus S)$ is abelian, then it is cyclic and (b) if $S$ additionally has divisibility $d$, then $\pi_1(X \setminus S) \cong \Z_d$.
A little harder: calculate the $\Z[\Z]$-homology of a $\Z$-sphere exterior (i.e. $H_i(X_S^\infty)$ as a $\Z[\Z]$-module).
Along the way,  describe the boundary of the exterior of a sphere in $X$.
\item Prove that the intersection form of the branched cover of a simple sphere is nonsingular;
deduce that the equivariant intersection form of the branched cover is nonsingular.
A little harder: prove that the equivariant intersection form of a~$\Z$-sphere exterior is nonsingular.
\item Think through the proofs from lectures~$2$ and~$3$ and list difficulties that arise when trying to prove the results for surfaces of arbitrary genus. 
\item Prove that for every nonsingular symmetric bilinear form~$Q$ over $\Z$, there is a closed simply-connected~$4$-manifold with intersection form~$Q$ (use without proof Freedman's result that integer homology~$3$-spheres bound contractible manifolds).
Understand the main steps of Boyer's argument that, given a nondegenerate integral symmetric form~$Q$ and a closed~$3$-manifold~$Y$, there exists a~$4$-manifold~$X$ with boundary~$\partial X=Y$ and~$Q_X \cong Q$~\cite[Section 2, page 31 onwards]{BoyerRealization}; focus on the case where~$Y$ is a rational homology~$3$-sphere.
Think through the~$\Z[\Z]$ analogue hinted at during the proof of the first item of Theorem~\ref{thm:Realisation}.
\item Bonus/Challenge questions:
\begin{enumerate}
\item Attempt to determine the pointed hermitian form of a simple sphere $S \subset \C P^2$ representing the class $2 \in \Z \cong H_2(\C P^2)$.
\item Attempt to conclude Example~\ref{ex:Enumeration}, i.e. is every nonsingular size~$4$ matrix~$A(t)$ with~$A(1) \cong \bsm 0& 1 \\ 1&0 \esm^{\oplus 2}$ necessarily congruent over~$\Z[\Z]$ to~$\bsm 0& 1 \\ 1&0 \esm^{\oplus 2}$?
\item Attempt to establish the analogue of Example~\ref{ex:Enumeration} for $X=\#^3 S^2 \times S^2$ (this time, in the nullhomologous case, since $b_2(X) \geq |\sigma(X)| +6$, the $\Z[\Z]$-equivariant intersection form is determined).
How about $\#^n S^2 \times S^2$?
\item More generally,  attempt to classify simple spheres in $4$-manifolds with small Betti numbers (other than $S^4$ and $\C P^2$).
\end{enumerate}
\end{enumerate}

\def\MR#1{}
\bibliography{BiblioMontreal}
\end{document}